\theoremstyle{plain}
\newtheorem{thm}{Theorem}[section]
\newtheorem{prop}[thm]{Proposition}
\newtheorem{lem}[thm]{Lemma}
\newtheorem{cor}[thm]{Corollary}
\theoremstyle{definition}
\newtheorem{exmps}[thm]{Examples}
\newtheorem{dfns-rems}[thm]{Definitions and Remarks}
\newtheorem{notas-rems}[thm]{Notations and Remarks}
\newtheorem{exmps-rems}[thm]{Examples and Remarks}
\DeclareMathOperator{\depth}{depth}
\begin{document}


\title[Depth of second power of edge ideals]{Lower bounds for the depth of second power of edge ideals}


\author[S. A. Seyed Fakhari]{S. A. Seyed Fakhari}

\address{S. A. Seyed Fakhari, School of Mathematics, Statistics and Computer Science,
College of Science, University of Tehran, Tehran, Iran.}

\email{aminfakhari@ut.ac.ir}

\begin{abstract}
Assume that $G$ is a graph with edge ideal $I(G)$. We provide sharp lower bounds for the depth of $I(G)^2$ in terms of the star packing number of $G$.
\end{abstract}


\subjclass[2020]{Primary: 13C15, 13F55, 05E40}


\keywords{Depth, Edge ideal, Star packing number}


\thanks{}


\maketitle


\section{Introduction} \label{sec1}

Let $\mathbb{K}$ be a field and $S = \mathbb{K}[x_1,\ldots,x_n]$  be the
polynomial ring in $n$ variables over $\mathbb{K}$. Computing and finding bounds for the depth (or equivalently, projective dimension) of homogenous ideals of $S$ and their powers have been studied in  several papers (see e.g., \cite{b'}, \cite{chhktt}, \cite{ds}, \cite{fm}, \cite{htt}, \cite{hh''}, \cite{hktt}, \cite{kty} \cite{nt}, \cite{s3}, \cite{s2}, \cite{s1} and \cite{t}).

In \cite{fhm}, Fouli, H${\rm \grave{a}}$ and Morey introduced the notion of {\it initially regular sequence} and used it to provide a method for estimating the depth of a homogenous ideal. Using this method, in \cite{fhm1}, the same authors determined a combinatorial lower bound for the depth of edge ideals of graphs. Indeed, they proved that for every graph $G$ with edge ideal $I(G)$, we have$$\depth_S I(G)\geq \alpha_2(G)+1,$$where $\alpha_2(G)$ denotes the so-called star packing number of $G$ (see Section \ref{sec2} for the definition of star packing number). Let $I(G)^{(2)}$ denote the second symbolic power of $I(G)$. In \cite{s10}, we showed that for any graph $G$,$$\depth_S I(G)^{(2)}\geq \alpha_2(G).$$It is natural to ask whether the same inequality is true if one replaces the symbolic power by ordinary. The answer is negative as we will see in Examples \ref{sharp}. However, we prove in Theorem \ref{main} that for any graph $G$, we have$$\depth_S I(G)^2\geq \alpha_2(G)-2.$$Moreover, if $G$ is a $W(K_3)$-free graph (i.e., has no induced subgraph isomorphic to a whiskered triangle), then$$\depth_S I(G)^2\geq \alpha_2(G)-1.$$Also, for any triangle-free graph $G$, we have $$\depth_S I(G)^2\geq \alpha_2(G).$$Furthermore, we provide examples showing that the above inequalities are sharp.


\section{Preliminaries and known results} \label{sec2}

In this section, we provide the definitions and the known results which will be used in the next section.

Let $G$ be a simple graph with vertex set $V(G)=\big\{x_1, \ldots,
x_n\big\}$ and edge set $E(G)$. So, we identify the vertices of $G$ with the variables of $S$. Also, by abusing the notation, every edge of $G$ will be written by the product of its vertices. For a vertex $x_i$, the {\it neighbor set} of $x_i$ is $N_G(x_i)=\{x_j\mid x_ix_j\in E(G)\}$. We set $N_G[x_i]:=N_G(x_i)\cup \{x_i\}$. The cardinality of $N_G(x_i)$ is the {\it degree} of $x_i$. A vertex of degree one is called a {\it leaf} of $G$ and the unique edge incident to a leaf is a {\it pendant edge}. For every subset $U\subset V(G)$, the graph $G\setminus U$ has vertex set $V(G\setminus U)=V(G)\setminus U$ and edge set $E(G\setminus U)=\{e\in E(G)\mid e\cap U=\emptyset\}$. A subgraph $K$ of $G$ is called {\it induced} provided that two vertices of $K$ are adjacent if and only if they are adjacent in $G$. For any graph $H$, We say that $G$ is a $H$-free graph if it has no induced subgraph isomorphic to $H$. We denote a triangle by $K_3$. A {\it whiskered triangle}, denoted by $W(K_3)$ is the graph obtained from $K_3$ by attaching a pendant edge to each of its vertices.

The {\it edge ideal} of a graph $G$ is the monomial ideal generated by quadratic squarefree monomials corresponding to the edges of $G$. In other words,$$I(G)=\big(x_ix_j \, |\,  x_ix_j\in E(G)\big)\subset S.$$

Let $G$ be a graph and $x$ be a vertex of $G$. The subgraph ${\rm St}(x)$ of $G$ with vertex set $N_G[x]$ and edge set $\{xy\, |\, y\in N_G(x)\}$ is called a {\it star with center $x$}. A {\it star packing} of $G$ is a family $\mathcal{X}$ of stars in $G$ which are pairwise disjoint, i.e., $V({\rm St}(x))\cap V({\rm St}(x'))=\emptyset$, for ${\rm St}(x), {\rm St}(x')\in \mathcal{X}$ with $x\neq x'$. The quantity$$\max\big\{|\mathcal{X}|\, |\, \mathcal{X} \ {\rm is \ a \ star \ packing \ of} \ G\big\}$$ is called the {\it star packing number} of $G$. Following \cite{fhm1}, we denote the star packing number of $G$ by $\alpha_2(G)$.

As it was mentioned in introduction, Fouli, H${\rm \grave{a}}$ and Morey \cite{fhm, fhm1} determined a lower bound for the depth of $I(G)$ in terms of the star packing number of $G$.

\begin{prop} \label{spn}
Let $G$ be graph with edge ideal $I(G)$. Then$$\depth_S I(G)\geq \alpha_2(G)+1.$$
\end{prop}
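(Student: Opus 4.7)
The plan is to use the method of \emph{initially regular sequences} developed by Fouli, H\`a and Morey. Recall that this method provides the following: if $f_1,\ldots,f_t \in I(G)$ form an initially regular sequence on $I(G)$ with respect to a monomial order $<$ on $S$, then $\depth_S I(G) \geq t$. The goal is therefore to construct, from a maximum star packing of $G$, such a sequence of length $\alpha_2(G)+1$. Equivalently, by applying the depth lemma to the short exact sequence $0 \to I(G) \to S \to S/I(G) \to 0$, one may instead aim for $\depth_S S/I(G)\geq \alpha_2(G)$ by producing an initially regular sequence of linear forms on $S/I(G)$.

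Fix a maximum star packing $\mathcal{X} = \{\mathrm{St}(c_1),\ldots,\mathrm{St}(c_s)\}$ of $G$ with $s=\alpha_2(G)$. Set $V_i = V(\mathrm{St}(c_i))$ and $W = V(G)\setminus \bigcup_{i=1}^{s} V_i$. A key combinatorial input that I would extract from maximality is: every vertex $w\in W$ has all its neighbors inside $\bigcup_i V_i$ (otherwise the star centered at $w$ could be added to $\mathcal{X}$), and in particular $W$ is independent in $G$. This restricts the way generators of $I(G)$ can involve the variables in $W$.

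For each star I would then select one distinguished leaf $\ell_i \in N_G(c_i)$ and form an element $f_i\in I(G)$ supported on $V_i$ — either the edge $c_i\ell_i$ itself, or a linear combination of edges of $\mathrm{St}(c_i)$ chosen so that its leading term is $c_i\ell_i$. For the final element $f_{s+1}$, I would use the observation from the previous paragraph to select an edge incident to $W$ or a second edge inside some $V_i$, so that its leading term is disjoint from those of $f_1,\ldots,f_s$. The monomial order $<$ would be chosen compatibly with the stars: within each $V_i$, $c_i$ and $\ell_i$ are ranked so that $c_i\ell_i$ becomes the leading monomial of $f_i$, and between different stars the order is lexicographic. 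Disjointness of the stars then guarantees that the leading terms of $f_1,\ldots,f_{s+1}$ involve pairwise disjoint sets of variables, which is the principal reason to expect regularity.

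The main obstacle — and the bulk of the work — is to verify the initial regularity condition at each step. After passing to $\mathrm{in}_<(I(G))$, one must show that the leading monomial of $f_i$ is a nonzerodivisor on $S/\bigl(\mathrm{in}_<(I(G)) + (\mathrm{in}_<(f_1),\ldots,\mathrm{in}_<(f_{i-1}))\bigr)$. Since $\mathrm{in}_<(I(G))$ is a squarefree monomial ideal, this reduces to a combinatorial check: excluding the existence of a monomial $m$ outside the relevant quotient ideal such that $m\cdot\mathrm{in}_<(f_i)$ lies inside it. The vertex-disjointness of the stars, together with the constraint on how edges of $G$ can meet $W$, is exactly what is needed to rule out such $m$. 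I expect the most delicate case to be the construction and verification of the extra element $f_{s+1}$, since by that point all stars have been consumed and one must exploit the structure of $W$ and of edges crossing between distinct $V_i$ to produce an initial term whose regularity can still be certified.
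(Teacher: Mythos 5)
The paper offers no proof of this proposition: it is quoted from Fouli, H\`a and Morey \cite{fhm, fhm1}, and your overall plan (an initially regular sequence built from a maximum star packing) is indeed the strategy of that reference. As written, however, your sketch has two concrete defects beyond the fact that the decisive verification is deferred.

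First, the combinatorial input you extract from maximality is false. Maximality of the packing only guarantees that every $w\in W$ has \emph{at least one} neighbour in $\bigcup_i V_i$; it does not force all neighbours of $w$ to lie there, and $W$ need not be independent. For $G=C_5$ with vertices $x_1,\ldots,x_5$ and the single star $\mathrm{St}(x_1)=\{x_5,x_1,x_2\}$ (here $\alpha_2(C_5)=1$), one has $W=\{x_3,x_4\}$ and $x_3x_4\in E(G)$. Since you invoke this claim both to control how generators of $I(G)$ meet $W$ and to manufacture the extra element $f_{s+1}$, that portion of the argument collapses.

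Second, and more fundamentally, the elements you propose cannot satisfy the regularity condition you yourself state. If $f_i$ is the edge $c_i\ell_i$, or any combination of edges of $\mathrm{St}(c_i)$ with leading term $c_i\ell_i$, then $\mathrm{in}_<(f_i)=c_i\ell_i$ is a minimal generator of $\mathrm{in}_<(I(G))=I(G)$ (the ideal is monomial, so it equals its initial ideal); hence $\mathrm{in}_<(f_i)$ is zero, and in particular a zerodivisor, in $S/\bigl(\mathrm{in}_<(I(G))+(\mathrm{in}_<(f_1),\ldots,\mathrm{in}_<(f_{i-1}))\bigr)$. The condition you need to check therefore fails identically for every $f_i$. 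The actual Fouli--H\`a--Morey construction uses \emph{linear} forms, one per star, of the shape $f_i=\sum_{y\in N_G[c_i]}y$, with the order arranged so that $\mathrm{in}_<(f_i)$ is a single variable of the $i$-th star; vertex-disjointness of the stars is what lets these variables act like a regular sequence modulo the edge ideal, yielding $\depth_S S/I(G)\geq\alpha_2(G)$. The ``$+1$'' then comes for free from $\depth_S I(G)=\depth_S S/I(G)+1$, not from an additional element $f_{s+1}$ --- the search for which is, in any case, undermined by the false claim about $W$.
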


We close this section by recalling the concept of polarization.

For every monomial ideal $I$, we denote the set of minimal monomial generators of $I$ by $G(I)$.
Let $I$ be a monomial ideal of $S$ with $G(I)=u_1,\ldots,u_m$,
where $u_j=\prod_{i=1}^{n}x_i^{a_{i,j}}$, $1\leq j\leq m$. For every $i$
with $1\leq i\leq n$, let$$a_i:=\max\{a_{i,j}\mid 1\leq j\leq m\},$$and
suppose that $$T=\mathbb{K}[x_{1,1},x_{1,2},\ldots,x_{1,a_1},x_{2,1},
x_{2,2},\ldots,x_{2,a_2},\ldots,x_{n,1},x_{n,2},\ldots,x_{n,a_n}]$$ is a
polynomial ring over the field $\mathbb{K}$. Let $I^{{\rm pol}}$ be the squarefree
monomial ideal of $T$ with minimal generators $u_1^{{\rm pol}},\ldots,u_m^{{\rm pol}}$, where
$u_j^{{\rm pol}}=\prod_{i=1}^{n}\prod_{k=1}^{a_{i,j}}x_{i,k}$, $1\leq j\leq m$. The
monomial $u_j^{{\rm pol}}$ is called the {\it polarization} of $u_j$, and the ideal $I^{{\rm pol}}$
is called the {\it polarization} of $I$.


\section{Main Results} \label{sec3}

In this section, we prove the main result of this paper, Theorem \ref{main} which provides lower bounds for the depth of second power of an edge ideal $I(G)$ in terms of the star packing number of $G$. To do this, we need to prove some auxiliary lemmas. We first estimate the star packing number of a graph obtained from  $G$ by deleting a certain subset of its vertices.

\begin{lem} \label{star}
Let $G$ be a $W(K_3)$-free graph and suppose that $x_1, x_2, x_3$ are three vertices of $G$ which form a triangle. Then$$\alpha_2\big(G\setminus \bigcup_{i=1}^3N_G(x_i)\big)\geq \alpha_2(G)-2.$$
\end{lem}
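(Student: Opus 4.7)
The plan is to begin with a maximum star packing $\mathcal{X}$ of $G$, so $|\mathcal{X}|=\alpha_2(G)$, and discard those stars whose centers lie in $W:=\bigcup_{i=1}^3 N_G(x_i)$. For a surviving center $y\notin W$, the star ${\rm St}_H(y)$ of $H:=G\setminus W$ has vertex set $N_H[y]\subseteq N_G[y]$, so pairwise vertex-disjointness in $G$ descends to pairwise vertex-disjointness in $H$. The surviving stars therefore form a star packing of $H$ of cardinality $\alpha_2(G)-k$, where $k$ counts the stars of $\mathcal{X}$ with center in $W$. It thus suffices to show $k\le 2$, and this is where the $W(K_3)$-free hypothesis must enter.

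I would split into cases according to whether some triangle vertex $x_i$ is the center of a star of $\mathcal{X}$. If, say, ${\rm St}_G(x_1)\in\mathcal{X}$, then this star absorbs $x_1,x_2,x_3$, and any other center $y\in N_G(x_j)$ would force $x_j$ (or $y$ itself, when $j=1$) to belong to two disjoint stars; hence $k=1$. Otherwise no $x_i$ is a center of $\mathcal{X}$. Suppose for contradiction that three distinct centers $y_1,y_2,y_3\in W$ exist. Each ${\rm St}_G(y_\ell)$ contains some $x_{i_\ell}$, and disjointness together with $|\{x_1,x_2,x_3\}|=3$ forces the assignment $\ell\mapsto i_\ell$ to be a bijection. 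After relabeling, $y_j\sim x_j$ and $y_j\not\sim x_k$ for $k\neq j$ (else $x_k$ would lie in two stars); similarly $y_i\not\sim y_j$ for $i\neq j$. The six vertices being pairwise distinct, the induced subgraph on $\{x_1,x_2,x_3,y_1,y_2,y_3\}$ is exactly $W(K_3)$, contradicting the hypothesis and yielding $k\le 2$.

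The main obstacle is the correct identification of the induced subgraph as $W(K_3)$ rather than something denser: one must rule out that a single $y_\ell$ is adjacent to several triangle vertices (by pigeonhole this immediately forces $k\le 2$ anyway, since each $x_i$ can inhabit only one star); rule out edges among the $y_\ell$'s (handled directly by disjointness of stars in $\mathcal{X}$); and rule out the extra edges $y_j x_k$ with $k\neq j$ (again by disjointness, since such a $y_j$ would steal $x_k$ from ${\rm St}_G(y_k)$). Once these checks are completed the conclusion is immediate, and the surviving stars provide the required star packing of $H$ of size at least $\alpha_2(G)-2$.
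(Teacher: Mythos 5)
Your proof is correct and takes essentially the same route as the paper's: pass to the stars whose centers survive the deletion of $\bigcup_{i=1}^3 N_G(x_i)$, observe that at most three centers can be lost, and use the $W(K_3)$-free hypothesis to show that three lost centers would yield an induced whiskered triangle on $\{x_1,x_2,x_3,y_1,y_2,y_3\}$, so in fact at most two are lost. Your case split (some $x_i$ is a center versus none is) and the disjointness checks ruling out the extra edges mirror the paper's argument.
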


\begin{proof}
To simplify the notation, set $A:=N_G(x_1)\cup N_G(x_2)\cup N_G(x_3)$. Let $\mathcal{S}$ be the set of the centers of stars in a largest star packing of $G$. In particular, $|\mathcal{S}|=\alpha_2(G)$. Since every vertex in $A$ is adjacent to at least one of the vertices $x_1, x_2, x_3$, it follows from the definition of star packing that $|\mathcal{S}\cap A|\leq 3$. If $|\mathcal{S}\cap A|\leq 2$, then the stars in $G\setminus A$ centered at the vertices in $\mathcal{S}\setminus A$ form a star packing in $G\setminus A$ of size at least $\alpha_2(G)-2$ and the assertion follows. So, we need to consider the case $|\mathcal{S}\cap A|=3$.

Let $z_1, z_2, z_3$ be the vertices belonging to $\mathcal{S}\cap A$. First, assume that$$\{z_1, z_2, z_3\}\cap \{x_1, x_2, x_3\}\neq\emptyset.$$For example, suppose that $x_1=z_1$. Then we have either $z_2\in N_G(x_2)$ or $z_2\in N_G(x_3)$. In the first case, $x_2\in N_G(z_1)\cap N_G(z_2)$ and in the second case, $x_3\in N_G(z_1)\cap N_G(z_2)$. Both are contradictions. Hence, $$\{z_1, z_2, z_3\}\cap \{x_1, x_2, x_3\}=\emptyset.$$Without loss of generality, we may assume that $z_i\in N_G(x_i)\setminus\{x_1, x_2, x_3\}$, for each integer $i=1, 2, 3$. Remind that for $i\neq j$, we have $N_G[z_i]\cap N_G[z_j]=\emptyset$. This yields that $z_iz_j, z_ix_j\notin E(G)$, for $i\neq j$. Consequently, the vertices $x_1, x_2, x_3, z_1, z_2, z_3$ form an induced $W(K_3)$ in $G$ which is a contradiction. The contradiction shows that $|\mathcal{S}\cap A|\leq 2$ which  completes the proof.
\end{proof}

The next two lemmas will be used in the proof of Corollary \ref{cordepth} which provides a lower bound for the depth of a certain ideal constructed from a graph $G$.

\begin{lem} \label{int}
Let $G$ be a graph and suppose that $x_ix_j$ is an edge of $G$. Set $L:=N_G(x_i)\cap N_G(x_j)$ and let $G'$ be the graph with $V(G')=V(G)\setminus L$ and edge set$$E(G')=E(G\setminus L)\cup\{x_px_q\mid x_p\in N_{G\setminus L}(x_i), x_q\in N_{G\setminus L}(x_j)\}.$$Then$$(I(G):x_i)\cap (I(G):x_j)=I(G')+(L).$$
\end{lem}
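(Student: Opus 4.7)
The plan is to establish the claimed equality of monomial ideals by proving each containment separately. Both sides are monomial ideals, so each direction reduces to checking finitely many monomial memberships: for one direction, that each listed generator of the right-hand side lies in the intersection; for the other, that an arbitrary monomial in the intersection lies in $I(G') + (L)$.

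For the containment $I(G') + (L) \subseteq (I(G):x_i) \cap (I(G):x_j)$, I would go through the generators of the left-hand side. Each $x_k \in L$ is by definition a common neighbor of $x_i$ and $x_j$, so the edges $x_ix_k, x_jx_k \in I(G)$ immediately place $x_k$ in both colon ideals. The generators of $I(G')$ split into the edges of $G \setminus L$, which already sit in $I(G)$ and hence in either colon, and the new edges $x_px_q$ with $x_p \in N_{G \setminus L}(x_i)$ and $x_q \in N_{G \setminus L}(x_j)$; for such an edge the factor $x_p$ alone lies in $(I(G):x_i)$ (since $x_ix_p \in I(G)$), so the product does too, and symmetrically for $(I(G):x_j)$.

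For the reverse inclusion, I would pick a monomial $u \in (I(G):x_i) \cap (I(G):x_j)$, use that both $x_iu$ and $x_ju$ lie in $I(G)$, and split into cases. If some edge $e \in E(G)$ already divides $u$, then either both endpoints of $e$ lie outside $L$, giving $u \in I(G \setminus L) \subseteq I(G')$, or some endpoint of $e$ lies in $L$, giving $u \in (L)$. Otherwise no edge of $G$ divides $u$; the relation $x_iu \in I(G)$ then forces $x_i$ to be an endpoint of the witnessing edge, so some $x_p \in N_G(x_i)$ divides $u$, and symmetrically some $x_q \in N_G(x_j)$ divides $u$. If $x_p = x_q$ then $x_p \in L$ and $u \in (L)$; if either $x_p$ or $x_q$ lies in $L$, again $u \in (L)$; otherwise $x_p \in N_{G \setminus L}(x_i)$, $x_q \in N_{G \setminus L}(x_j)$, and $x_p \neq x_q$, so by construction $x_px_q$ is an edge of $G'$ and $u \in I(G')$.

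The argument is essentially routine, so I do not expect a serious obstacle; the only point requiring a moment's care is the bookkeeping around whether $x_p$ or $x_q$ might coincide with $x_j$ or $x_i$ respectively. This causes no real trouble, because $x_i, x_j \notin L$ (no vertex is its own neighbor in a simple graph), so these boundary cases simply produce edges already in $E(G \setminus L) \subseteq E(G')$.
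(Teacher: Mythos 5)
Your proposal is correct and follows essentially the same route as the paper: both directions are checked on monomials, with the forward inclusion reduced to the three types of generators of $I(G')+(L)$ and the reverse inclusion handled by extracting a neighbor of $x_i$ and a neighbor of $x_j$ dividing the given monomial and observing that they are distinct and outside $L$ unless the monomial already lies in $(L)$. The boundary case you flag (e.g.\ $x_p=x_j$) is handled the same way implicitly in the paper, since $x_i,x_j\notin L$ and such pairs are already edges of $G\setminus L$.
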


\begin{proof}
Observe that if $x_p\in N_{G\setminus L}(x_i)$ and $x_q\in N_{G\setminus L}(x_j)$, then $x_p\neq x_q$. We first prove the inclusion "$\supseteq$". Let $u$ be a monomial in $I(G')+(L)$. By symmetry, it is enough to prove that $u\in (I(G):x_i)$. If $u\in (L)$, then $u$ is divisible by a variable $x_r\in L$. It follows from the definition of $L$ that $x_rx_i\in I(G)$. Consequently, $ux_i\in I(G)$ which implies that $u\in (I(G):x_i)$. Therefore, assume that $u\notin (L)$. Thus, $u\in I(G')$. If $u\in I(G)$, then clearly we have $u\in (I(G):x_i)$. Hence, suppose that $u\notin I(G)$. Then we conclude from the definition of $G'$ that there are vertices $x_p\in N_{G\setminus L}(x_i)$ and $x_q\in N_{G\setminus L}(x_j)$ such that $x_px_q$ divides $u$. As $x_p\in N_G(x_i)$, we have $x_px_i\in I(G)$. Since $x_px_i$ divides $ux_i$, we deduce that $ux_i\in I(G)$. This means that $u\in (I(G):x_i)$.

To prove the reverse inclusion, let $v$ be a monomial in $(I(G):x_i)\cap (I(G):x_j)$ and suppose that $v\notin (L)$. We must show that $v\in I(G')$. If $v\in I(G)$, then we are done. Hence, assume that $v\notin I(G)$. It follows from $vx_i\in I(G)$ that there is a vertex $x_t\in N_G(x_i)$ which divide $v$. As $v\notin (L)$, we deduce that $x_t\in N_{G\setminus L}(x_i)$. Similarly, there is a vertex $x_s\in N_{G\setminus L}(x_j)$ which divide $v$. We conclude from $x_t, x_s\notin L$ that $x_t\neq x_s$. Consequently, $x_tx_s$ divides $v$. It follows from the definition of $G'$ that $x_tx_s\in I(G')$ which implies $v\in I(G')$.
\end{proof}

\begin{lem} \label{depthlem}
Let $G$ be a graph and suppose that $x_ix_j$ is an edge of $G$. Let $A$ be a subset of $N_G(x_i)\cup N_G(x_j)$ with $x_i,x_j\notin A$. Set$$J:=(I(G\setminus A):x_i)\cap (I(G\setminus A):x_j)$$and $S_A:=\mathbb{K}[x_k: 1\leq k\leq n, x_k\notin A]$. Then$$\depth_{S_A} J\geq \alpha_2(G).$$
\end{lem}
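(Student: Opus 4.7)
The plan is to analyze $J$ via the Mayer--Vietoris short exact sequence
\[
0\longrightarrow S_A/J\longrightarrow S_A/(I(H):x_i)\oplus S_A/(I(H):x_j)\longrightarrow S_A/\bigl((I(H):x_i)+(I(H):x_j)\bigr)\longrightarrow 0,
\]
with $H:=G\setminus A$, and to bound the depths of the two outer terms from below. The depth lemma will then give $\depth_{S_A}S_A/J\geq\alpha_2(G)-1$, and since $J$ is a proper ideal of positive height (it contains $x_ix_j$), the sequence $0\to J\to S_A\to S_A/J\to 0$ upgrades this to $\depth_{S_A}J\geq\depth_{S_A}S_A/J+1\geq\alpha_2(G)$.

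For each colon I would use the standard identity $(I(H):x_i)=(N_H(x_i))+I(H\setminus N_H[x_i])$: quotienting out the regular sequence of variables $N_H(x_i)$ leaves $x_i$ free, so that
\[
S_A/(I(H):x_i)\;\cong\;\bigl(\mathbb{K}[V(H)\setminus N_H[x_i]]/I(H\setminus N_H[x_i])\bigr)\otimes_{\mathbb{K}}\mathbb{K}[x_i],
\]
and Proposition~\ref{spn} applied to $H\setminus N_H[x_i]$ yields $\depth_{S_A}S_A/(I(H):x_i)\geq\alpha_2(H\setminus N_H[x_i])+1$ (and symmetrically for $x_j$). For the sum, the variables in $N_H(x_i)\cup N_H(x_j)$ already include $x_i$ and $x_j$ (since $x_ix_j\in E(H)$), so modding by them collapses $(I(H):x_i)+(I(H):x_j)$ to the edge ideal of the induced subgraph $H\setminus(N_H[x_i]\cup N_H[x_j])$, giving $\depth_{S_A}S_A/\bigl((I(H):x_i)+(I(H):x_j)\bigr)\geq\alpha_2(H\setminus(N_H[x_i]\cup N_H[x_j]))$ by a second application of Proposition~\ref{spn}.

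The heart of the argument, which I expect to be the main obstacle, is the combinatorial inequalities
\[
\alpha_2\bigl(H\setminus N_H[x_i]\bigr)\geq\alpha_2(G)-2,\qquad\alpha_2\bigl(H\setminus(N_H[x_i]\cup N_H[x_j])\bigr)\geq\alpha_2(G)-2.
\]
Starting from a largest star packing $\mathcal{S}$ of $G$, at most one of its centers lies in $N_G[x_i]$ and at most one in $N_G[x_j]$, because $x_i$ (respectively $x_j$) belongs to the closed neighborhood of at most one center. The hypothesis $A\subseteq N_G(x_i)\cup N_G(x_j)$ guarantees that any center of $\mathcal{S}$ lying in $A$ already lies in $N_G[x_i]\cup N_G[x_j]$, so passing to either induced subgraph discards at most two centers of $\mathcal{S}$; the remaining centers still form a valid star packing, because their closed neighborhoods in the smaller graph are contained in their closed neighborhoods in $G$, which are pairwise disjoint.

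Combining everything, the two colon terms bound $\depth$ from below by $\alpha_2(G)-1$ each, and the sum term by $\alpha_2(G)-2$, which the depth lemma promotes to $\alpha_2(G)-1$ by its position shift. The minimum is therefore $\alpha_2(G)-1$, so $\depth_{S_A}S_A/J\geq\alpha_2(G)-1$, and hence $\depth_{S_A}J\geq\alpha_2(G)$, as required.
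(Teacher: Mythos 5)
Your proposal is correct and follows essentially the same route as the paper: the same Mayer--Vietoris sequence for the intersection of the two colon ideals, the same identifications $(I(G\setminus A):x_i)=(N_{G\setminus A}(x_i))+I(G\setminus(A\cup N_G[x_i]))$ and its analogue for the sum, and the same depth-lemma bookkeeping via Proposition~\ref{spn}. The only cosmetic difference is that you prove the star-packing inequality $\alpha_2(G\setminus U)\geq\alpha_2(G)-2$ for $U\subseteq N_G[x_i]\cup N_G[x_j]$ directly (correctly), where the paper cites it from \cite[Lemma 3.1]{s10}.
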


\begin{proof}
Consider the following short exact sequence.
\begin{align*}
& 0\longrightarrow \frac{S_A}{J}\longrightarrow \frac{S_A}{(I(G\setminus A):x_i)}\oplus\frac{S_A}{(I(G\setminus A):x_j)}\\ & \longrightarrow \frac{S_A}{(I(G\setminus A):x_i)+(I(G\setminus A):x_j)}\longrightarrow 0
\end{align*}
Applying depth Lemma \cite[Proposition 1.2.9]{bh} on the above exact sequence, it suffices to prove that
\begin{itemize}
\item [(a)] $\depth_{S_A}(I(G\setminus A):x_i) \geq \alpha_2(G)$,
\item [(b)] $\depth_{S_A}(I(G\setminus A):x_j) \geq \alpha_2(G)$, and
\item [(c)] $\depth_{S_A}((I(G\setminus A):x_i)+(I(G\setminus A):x_j))\geq \alpha_2(G)-1$.
\end{itemize}

To prove (a), note that
\begin{align*}
& (I(G\setminus A):x_i)=I(G\setminus(A\cup N_{G\setminus A}[x_i]))+({\rm the\ ideal\ generated\ by}\ N_{G\setminus A}(x_i))\\ & =I(G\setminus(A\cup N_G[x_i]))+({\rm the\ ideal\ generated\ by}\ N_{G\setminus A}(x_i)).
\end{align*}
Hence,
\[
\begin{array}{rl}
\depth_{S_A}(I(G\setminus A):x_i)=\depth_{S'}I(G\setminus (A\cup N_G[x_i])),
\end{array} \tag{1} \label{1}
\]
where $S'=\mathbb{K}\big[x_k: 1\leq k\leq n, x_k\notin N_{G\setminus A}(x_i)\cup A\big]$. Obviously, $x_i$ is a regular element of $S'/I(G\setminus (A\cup N_G[x_i]))$. Therefore, Proposition \ref{spn} implies that
\[
\begin{array}{rl}
\depth_{S'}I(G\setminus (A\cup N_G[x_i]))\geq \alpha_2(G\setminus (A\cup N_G[x_i]))+2.
\end{array} \tag{2} \label{2}
\]
It follows from \cite[Lemma 3.1]{s10} that
\[
\begin{array}{rl}
\alpha_2(G\setminus(A\cup N_G[x_i]))\geq \alpha_2(G)-2.
\end{array} \tag{3} \label{3}
\]
Thus, we conclude from equality (\ref{1}) and inequalities (\ref{2}) and (\ref{3}) that$$\depth_{S_A}(I(G\setminus A):x_i) \geq \alpha_2(G),$$and this completes the proof of (a). The proof of (b) is similar to that of (a). We now prove (c).

Note that
\begin{align*}
& (I(G\setminus A):x_i)+(I(G\setminus A):x_j)\\ & =I(G\setminus(A\cup N_{G\setminus A}[x_i]\cup N_{G\setminus A}[x_j]))+({\rm the\ ideal\ generated\ by}\ N_{G\setminus A}(x_i)\cup N_{G\setminus A}(x_j))\\ & =I(G\setminus(A\cup N_G[x_i]\cup N_G[x_j]))+({\rm the\ ideal\ generated\ by}\ N_{G\setminus A}(x_i)\cup N_{G\setminus A}(x_j))\\ & =I(G\setminus(N_G[x_i]\cup N_G[x_j]))+({\rm the\ ideal\ generated\ by}\ N_{G\setminus A}(x_i)\cup N_{G\setminus A}(x_j)),
\end{align*}
where the last equality follows from $A\subseteq N_G(x_i)\cup N_G(x_j)$. We conclude that
\[
\begin{array}{rl}
\depth_{S_A}((I(G\setminus A):x_i)+(I(G\setminus A):x_j))=\depth_{S''}I(G\setminus(N_G[x_i]\cup N_G[x_j])),
\end{array} \tag{4} \label{4}
\]
where $S''=\mathbb{K}\big[x_k: 1\leq k\leq n, x_k\notin N_{G\setminus A}(x_i)\cup N_{G\setminus A}(x_j)\cup A\big]$. Using Proposition \ref{spn}, we deuce that
\[
\begin{array}{rl}
\depth_{S''}I(G\setminus(N_G[x_i]\cup N_G[x_j]))\geq \alpha_2(G\setminus(N_G[x_i]\cup N_G[x_j]))+1.
\end{array} \tag{5} \label{5}
\]
We also know from \cite[Lemma 3.1]{s10} that
\[
\begin{array}{rl}
\alpha_2(G\setminus(N_G[x_i]\cup N_G[x_j]))\geq \alpha_2(G)-2.
\end{array} \tag{6} \label{6}
\]
Consequently, the assertion of (c) follows from equality (\ref{4}) and inequalities (\ref{5}) and (\ref{6}).
\end{proof}

The following corollary is a consequence of Lemmata \ref{int} and \ref{depthlem}.

\begin{cor} \label{cordepth}
Assume that $G$ is a graph and $x_ix_j$ is an edge of $G$. Let $A$ be a subset of $N_G(x_i)\cup N_G(x_j)$ with $x_i,x_j\notin A$. Set $L:=N_{G\setminus A}(x_i)\cap N_{G\setminus A}(x_j)$ and $S_A:=\mathbb{K}[x_k: 1\leq k\leq n, x_k\notin A]$. Suppose $G'$ is the graph with $V(G')=V(G)\setminus (A\cup L)$ and edge set$$E(G')=E\big(G\setminus (A\cup L)\big)\cup\big\{x_px_q\mid x_p\in N_{G\setminus(A\cup L)}(x_i), x_q\in N_{G\setminus(A\cup L)}(x_j)\big\}.$$ Then$$\depth_{S_A}\big(I(G')+(L)\big)\geq \alpha_2(G).$$
\end{cor}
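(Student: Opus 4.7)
The plan is to recognize that this corollary is a direct synthesis of the two preceding lemmas, with $G \setminus A$ playing the role of the ambient graph in Lemma \ref{int}. Specifically, because the hypothesis $x_i, x_j \notin A$ guarantees that $x_i x_j$ is still an edge of $G \setminus A$, one can apply Lemma \ref{int} with $G \setminus A$ in place of $G$ and the same edge $x_i x_j$.

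Once that substitution is made, I would check that the objects defined in the statement of the corollary match exactly what Lemma \ref{int} produces. The intersection $N_{G \setminus A}(x_i) \cap N_{G \setminus A}(x_j)$ occurring in the lemma is, by definition, the set $L$ from the corollary; and since $L \subseteq V(G) \setminus A$, we have $L \cap A = \emptyset$, so the vertex set $V(G \setminus A) \setminus L$ coming out of the lemma equals $V(G) \setminus (A \cup L)$, agreeing with $V(G')$. Finally, $(G \setminus A) \setminus L = G \setminus (A \cup L)$, and the bipartite-style edge augmentation $\{x_p x_q : x_p \in N_{(G\setminus A) \setminus L}(x_i),\ x_q \in N_{(G\setminus A) \setminus L}(x_j)\}$ is literally the edge set added to form $G'$ in the corollary. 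Hence the conclusion of Lemma \ref{int}, applied to $G \setminus A$, is the identity
\[
(I(G \setminus A) : x_i) \cap (I(G \setminus A) : x_j) = I(G') + (L).
\]

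With this identification in hand, the depth bound is immediate from Lemma \ref{depthlem}: that lemma asserts exactly that the left-hand side has depth at least $\alpha_2(G)$ over $S_A$, so the right-hand side $I(G') + (L)$ does as well. Therefore no further work is required beyond verifying the matching of notation.

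The main (and only) potential obstacle is this bookkeeping step: one must carefully track that $L$, the vertex set of $G'$, and the added edges of $G'$ in the statement of the corollary genuinely correspond to those produced by invoking Lemma \ref{int} on $G \setminus A$, rather than on $G$. Once that is settled, no combinatorial work about star packings or any exact-sequence manipulation is needed in this proof; those ingredients have already been absorbed into Lemma \ref{depthlem}.
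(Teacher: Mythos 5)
Your proposal is correct and follows exactly the paper's own argument: apply Lemma \ref{int} with $G\setminus A$ in place of $G$ to identify $(I(G\setminus A):x_i)\cap(I(G\setminus A):x_j)$ with $I(G')+(L)$, then invoke Lemma \ref{depthlem}. The extra bookkeeping you spell out (that $L$, $V(G')$, and the added edges match those produced by the lemma applied to $G\setminus A$) is exactly the verification the paper leaves implicit.
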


\begin{proof}
Let $J$ be the ideal defined in Lemma \ref{depthlem}. By substituting $G$ with $G\setminus A$ in Lemma \ref{int}, we obtain that $J=I(G')+(L)$. The claim now follows from Lemma \ref{depthlem}
\end{proof}

The following lemma is the most technical part of the proof of Theorem \ref{main}.

\begin{lem} \label{last}
Let $G$ be a graph and suppose that $x_ix_j$ is an edge of $G$. Let $A$ be a subset of $N_G(x_i)\cup N_G(x_j)$ with $x_i,x_j\notin A$. Then$$\depth_{S_A} \big(I(G\setminus A)^2:x_ix_j\big)\geq\alpha_2(G)-2.$$If moreover, $G$ is a $W(K_3)$-free graph, then$$\depth_{S_A}\big(I(G\setminus A)^2:x_ix_j\big)\geq\alpha_2(G)-1.$$
\end{lem}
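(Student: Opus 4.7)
The plan is to first establish the colon formula
$$\mathcal{J} := \bigl(I(G\setminus A)^2 : x_ix_j\bigr) = I(G\setminus A) + \bigl(N_{G\setminus A}(x_i)\bigr)\cdot\bigl(N_{G\setminus A}(x_j)\bigr),$$
set $H := G\setminus A$, and then induct on $|L|$ where $L := N_H(x_i)\cap N_H(x_j)$. The inclusion ``$\supseteq$'' is clear because $I(H)\cdot x_ix_j\subseteq I(H)^2$ (using $x_ix_j\in I(H)$) and $x_px_q\cdot x_ix_j=(x_px_i)(x_qx_j)\in I(H)^2$ whenever $x_p\in N_H(x_i)$ and $x_q\in N_H(x_j)$. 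The reverse inclusion is obtained by case analysis on how $x_i$ and $x_j$ are absorbed in a factorization $ux_ix_j=e_1e_2m$ with $e_1,e_2\in E(H)$.

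For the base case $|L|=0$, the ideal $(N_H(x_i))(N_H(x_j))$ is squarefree, and its minimal generators are precisely the extra edges added to $H$ to produce the graph $G'$ from Lemma~\ref{int}. Hence $\mathcal{J}=I(G')=I(G')+(L)$, and Corollary~\ref{cordepth} immediately yields $\depth_{S_A}\mathcal{J}\geq\alpha_2(G)$, stronger than either bound claimed.

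For the inductive step $|L|\geq 1$, I pick $x_p\in L$ and apply the depth lemma to
$$0\longrightarrow S_A/(\mathcal{J}:x_p)\xrightarrow{\,\cdot x_p\,}S_A/\mathcal{J}\longrightarrow S_A/(\mathcal{J}+(x_p))\longrightarrow 0.$$
Using the colon formula, a direct computation gives $\mathcal{J}+(x_p)=(x_p)+\bigl(I(G\setminus A')^2:x_ix_j\bigr)$ with $A':=A\cup\{x_p\}$; since $|N_{G\setminus A'}(x_i)\cap N_{G\setminus A'}(x_j)|=|L|-1$, the inductive hypothesis controls $\depth_{S_A}(\mathcal{J}+(x_p))$. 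For the colon, using $x_p\in N_H(x_i)\cap N_H(x_j)$ one computes
$$(\mathcal{J}:x_p)=(V)+I\bigl(G\setminus(A\cup V)\bigr),\qquad V:=N_H(x_p)\cup N_H(x_i)\cup N_H(x_j),$$
so Proposition~\ref{spn} yields $\depth_{S_A}(\mathcal{J}:x_p)\geq\alpha_2\bigl(G\setminus(A\cup V)\bigr)+1$.

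It remains to estimate $\alpha_2\bigl(G\setminus(A\cup V)\bigr)$. Since $A\cup V\subseteq N_G[x_i]\cup N_G[x_j]\cup N_G[x_p]$, the general inequality $\alpha_2(G\setminus N_G[S])\geq\alpha_2(G)-|S|$, proved by discarding the at most $|S|$ stars in an optimal packing that contain a vertex of $S$ (the general form of \cite[Lemma~3.1]{s10}), gives $\alpha_2\bigl(G\setminus(A\cup V)\bigr)\geq\alpha_2(G)-3$, whence $\depth_{S_A}(\mathcal{J}:x_p)\geq\alpha_2(G)-2$ and the depth lemma delivers $\depth_{S_A}\mathcal{J}\geq\alpha_2(G)-2$. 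In the $W(K_3)$-free case, $\{x_i,x_j,x_p\}$ forms a triangle (because $x_p\in L$), so Lemma~\ref{star} applies and tightens the estimate to $\alpha_2\bigl(G\setminus(N_G(x_i)\cup N_G(x_j)\cup N_G(x_p))\bigr)\geq\alpha_2(G)-2$, which lifts $\depth_{S_A}(\mathcal{J}:x_p)$ to $\alpha_2(G)-1$ and yields $\depth_{S_A}\mathcal{J}\geq\alpha_2(G)-1$. The main work lies in the two colon-ideal computations identifying $(\mathcal{J}:x_p)$ and $\mathcal{J}+(x_p)$; once these are in hand, Lemma~\ref{star} is precisely the combinatorial input responsible for the one-unit gain in the whiskered-triangle-free case.
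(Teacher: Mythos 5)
Your argument is correct and follows essentially the same route as the paper: both rest on Banerjee's description of $\big(I(G\setminus A)^2:x_ix_j\big)$, dispose of the case $L=\emptyset$ via Corollary \ref{cordepth}, and otherwise split off a vertex $x_p\in L$ by a short exact sequence, estimating the colon branch through $\alpha_2\big(G\setminus(N_G(x_i)\cup N_G(x_j)\cup N_G(x_p))\big)$ (with Lemma \ref{star} supplying the extra unit in the $W(K_3)$-free case) and feeding the sum branch back into the induction with $A'=A\cup\{x_p\}$. The only real difference is technical rather than conceptual: the paper first polarizes the non-squarefree generators $x_k^2$, $x_k\in L$, and runs the same exact sequence at $x_1$ in the whiskered graph $H$, whereas you manipulate the non-squarefree ideal directly, and your induction on $|L|$ is equivalent to the paper's backward induction on $|A|$.
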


\begin{proof}
Set $r:=|(N_G(x_i)\cup N_G(x_j))\setminus \{x_i,x_j\}|$. Then $|A|\leq r$. We proceed by backward induction on $|A|$. If $|A|=r$, then $A=(N_G(x_i)\cup N_G(x_j))\setminus \{x_i,x_j\}$. Hence, $G\setminus A$ is the disjoint union of the edge $x_ix_j$ with the graph $G\setminus (N_G(x_i)\cup N_G(x_j))$. Thus, we conclude from \cite[Lemma 2.10]{m} that$$\big(I(G\setminus A)^2:x_ix_j\big)=I(G\setminus A).$$Consequently, we deduce from Proposition \ref{spn}  that$$\depth_{S_A}\big(I(G\setminus A)^2:x_ix_j\big)\geq \alpha_2(G\setminus A)+1.$$On the other hand, we know from \cite[Lemma 3.1]{s10} that $\alpha_2(G\setminus A)\geq \alpha_2(G)-2$. This together with the above inequality implies that$$\depth_{S_A} \big(I(G\setminus A)^2:x_ix_j\big)\geq\alpha_2(G)-1.$$Therefore, assume that $|A|\leq r-1$.

Set $L:=N_{G\setminus A}(x_i)\cap N_{G\setminus A}(x_j)$ and let $G'$ be the graph introduced in Corollary \ref{cordepth}. We know from \cite[Theorems 6.5 and 6.7]{b} that
\begin{align*}
& \big(I(G\setminus A)^2:x_ix_j\big)=\\ & I(G\setminus A)+(x_px_q\mid x_p\in N_{G\setminus A}(x_i), x_q\in N_{G\setminus A}(x_j))=\\ & I(G\setminus A)+(x_px_q\mid x_p\in N_{G\setminus A}(x_i), x_q\in N_{G\setminus A}(x_j), x_p\neq x_q)+(x_k^2: x_k\in L).
\end{align*}
If $L=\emptyset$, then using the above equalities, we have$$\big(I(G\setminus A)^2:x_ix_j\big)=I(G')=I(G')+(L).$$Thus, in this case the assertion follows from Corollary \ref{cordepth}. Hence, suppose that $L\neq\emptyset$. Without loss of generality, assume that $L=\{x_1, \ldots, x_t\}$, for some integer $t\geq 1$. Let $H$ be the graph with$$I(H)=\big(I(G\setminus A)^2:x_ix_j\big)^{\rm pol}.$$In other words, $H$ is the graph with vertex set $V(H)=V(G\setminus A)\cup\{y_1, \ldots, y_t\}$ and edge set$$E(H)=E(G\setminus A)\cup \{x_px_q\mid x_p\in N_{G\setminus A}(x_i), x_q\in N_{G\setminus A}(x_j), x_p\neq x_q\}\cup\{x_1y_1, \ldots, x_ty_t\}.$$Let $T$ be the polynomial ring over $\mathbb{K}$ with variables corresponding to the vertices of $H$. It follows from \cite[Corollary 1.6.3]{hh} that
\[
\begin{array}{rl}
\depth_{S_A} \big(I(G\setminus A)^2:x_ix_j\big)=\depth_T I(H)-t.
\end{array} \tag{7} \label{7}
\]
Consider the short exact sequence
\begin{align*}
0\longrightarrow \frac{T}{(I(H):x_1)}\longrightarrow \frac{T}{I(H)} \longrightarrow \frac{T}{I(H)+(x_1)}\longrightarrow 0.
\end{align*}
It follows from depth Lemma \cite[Proposition 1.2.9]{bh} that
\[
\begin{array}{rl}
\depth_T I(H) \geq \min\big\{\depth_T (I(H):x_1), \depth_T (I(H),x_1)\big\}.
\end{array} \tag{8} \label{8}
\]
Therefore, using equality (\ref{7}) and inequality (\ref{8}) it is enough to prove the following statements.
\begin{itemize}
\item[(i)] $\depth_T (I(H):x_1)$ is at least $\alpha_2(G)+t-2$, and if $G$ is a $W(K_3)$-free graph, then $\depth_T (I(H):x_1)\geq \alpha_2(G)+t-1$.

\item[(ii)] $\depth_T (I(H), x_1)$ is at least $\alpha_2(G)+t-2$, and if $G$ is a $W(K_3)$-free graph, then $\depth_T (I(H),x_1)\geq \alpha_2(G)+t-1$.
\end{itemize}

We first prove (i). Note that$$(I(H):x_1)=I(H\setminus N_H[x_1])+({\rm the \ ideal \ generated \ by} \ N_H(x_1)).$$
Consequently,
\[
\begin{array}{rl}
\depth_T (I(H):x_1)=\depth_{T'} I(H\setminus N_H[x_1]),
\end{array} \tag{9} \label{9}
\]
where $T'$ is the polynomial ring which is obtained from $T$ by deleting the variables in $N_H(x_1)$. It is obvious that$$N_H[x_1]=N_{G\setminus A}(x_1)\cup N_{G\setminus A}(x_i)\cup N_{G\setminus A}(x_j)\cup\{y_1\}.$$In particular, the vertices $x_2, \ldots, x_t$ are contained in $N_H[x_1]$. Thus, $H\setminus N_H[x_1]$ is disjoint union of the isolated vertices $y_2, \ldots, y_t$ with the graph $H'$ defined as
\begin{align*}
H':= & G\setminus\big(A\cup N_{G\setminus A}(x_1)\cup N_{G\setminus A}(x_i)\cup N_{G\setminus A}(x_j)\big)\\ & =G\setminus \big(A\cup N_G(x_1)\cup N_G(x_i)\cup N_G(x_j)\big)\\ & =G\setminus\big(N_G(x_1)\cup N_G(x_i)\cup N_G(x_j)\big),
\end{align*}
where the last equality follows from $A\subseteq N_G(x_i)\cup N_G(x_j)$. Since $x_1, y_2, \ldots, y_t$ is a regular sequence on $T'/I(H\setminus N_H[x_1])$, we deduce that
\[
\begin{array}{rl}
\depth_{T'} I(H\setminus N_H[x_1])=\depth_{T''} I(H')+t,
\end{array} \tag{10} \label{10}
\]
where $T''$ is the polynomial ring which is obtained from $T'$ by deleting the variables $x_1, y_2, \ldots y_t$. Using Proposition \ref{spn}, we have
\[
\begin{array}{rl}
\depth_{T''} I(H')\geq\alpha_2(H')+1.
\end{array} \tag{11} \label{11}
\]
Also, we conclude from \cite[Lemma 3.1]{s10} and Lemma \ref{star} that
$$\alpha_2(H')=\alpha_2\big(G\setminus(N_G(x_1)\cup N_G(x_i)\cup N_G(x_j))\big)\geq \alpha_2(G)-3$$and if $G$ is a $W(K_3)$-free graph, then$$\alpha_2(H')=\alpha_2\big(G\setminus(N_G(x_1)\cup N_G(x_i)\cup N_G(x_j))\big)\geq \alpha_2(G)-2.$$
Together with equalities (\ref{9}) and (\ref{10}) and inequality (\ref{11}), we obtain the assertion of (i).

To prove (ii), note that$$(I(H),x_1)=I(H\setminus x_1)+(x_1).$$Therefore,
\[
\begin{array}{rl}
\depth_T(I(H),x_1)=\depth_{T_1}I(H\setminus x_1),
\end{array} \tag{12} \label{12}
\]
where $T_1$ is the polynomial ring obtained from $T$ by deleting the variable $x_1$. Since $y_1$ is an isolated vertex of $H\setminus x_1$, we conclude that $y_1$ is regular on $T_1/I(H\setminus x_1)$. Hence, equality (\ref{12}) yields that
\[
\begin{array}{rl}
\depth_T(I(H),x_1)=\depth_{T_2}I(H\setminus \{x_1, y_1\})+1,
\end{array} \tag{13} \label{13}
\]
where $T_2$ is the polynomial ring obtained from $T_1$ by deleting the variable $y_1$. Set $A':=A\cup\{x_1\}$. As $x_1\in L$, we have $A'\subseteq N_{G}(x_i)\cup N_{G}(x_j)$. Clearly, $x_i$ and $x_j$ do not belong to $A'$. Set $S_{A'}:=\mathbb{K}[x_k: 1\leq k\leq n, x_k\notin A']$. Since $|A'| > |A|$, it follows from the induction hypothesis that
\[
\begin{array}{rl}
\depth_{S_{A'}} \big(I(G\setminus A')^2:x_ix_j\big)\geq\alpha_2(G)-2,
\end{array} \tag{14} \label{14}
\]
and if moreover $G$ is a $W(K_3)$-free graph, then
\[
\begin{array}{rl}
\depth_{S_{A'}}\big(I(G\setminus A')^2:x_ix_j\big)\geq\alpha_2(G)-1.
\end{array} \tag{15} \label{15}
\]
It is obvious that$$\big(I(G\setminus A')^2:x_ix_j\big)^{\rm pol}=I(H\setminus\{x_1, y_1\}).$$ Thus, Using \cite[Corollary 1.6.3]{hh}, we have
\[
\begin{array}{rl}
\depth_{S_{A'}}\big(I(G\setminus A')^2:x_ix_j\big)=\depth_{T_2} I(H\setminus\{x_, y_1\})-(t-1).
\end{array} \tag{16} \label{16}
\]
Combining equalities (\ref{13}) and (\ref{16}) and inequalities (\ref{14}) and (\ref{15}) completes the proof of (ii).
\end{proof}

We are now ready to prove the main result of this paper.

\begin{thm} \label{main}
\begin{itemize}
\item[(1)] For any graph $G$, we have $\depth_S I(G)^2\geq \alpha_2(G)-2$.
\item[(2)] For any $W(K_3)$-free graph $G$, we have $\depth_S I(G)^2\geq \alpha_2(G)-1$.
\item[(3)] For any triangle-free graph $G$, we have $\depth_S I(G)^2\geq \alpha_2(G)$.
\end{itemize}
\end{thm}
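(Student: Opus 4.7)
The plan is to use the short exact sequence
\begin{align*}
0 \longrightarrow \frac{S}{(I(G)^2 : x_ix_j)} \xrightarrow{\cdot x_ix_j} \frac{S}{I(G)^2} \longrightarrow \frac{S}{(I(G)^2, x_ix_j)} \longrightarrow 0
\end{align*}
for a chosen edge $x_ix_j$ of $G$ (we may assume $G$ has at least one edge; otherwise $I(G)^2=0$ and all three claims are vacuous) and to apply the depth lemma, which reduces everything to bounding the depths of the two outer modules.

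The colon term is exactly what Lemma \ref{last} controls, specialized to $A=\emptyset$. For part (1) it gives $\depth(I(G)^2 : x_ix_j) \geq \alpha_2(G)-2$; for part (2) the $W(K_3)$-free improvement gives $\geq \alpha_2(G)-1$. For part (3), the triangle-free hypothesis forces the common-neighbor set $L := N_G(x_i) \cap N_G(x_j)$ to be empty, and the $L=\emptyset$ branch in the proof of Lemma \ref{last}---which appeals directly to Corollary \ref{cordepth}---yields the stronger bound $\depth(I(G)^2 : x_ix_j) \geq \alpha_2(G)$, matching exactly the requirement.

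For the quotient term I would induct on the number of vertices of $G$, using the identity $(I(G)^2, x_ix_j) = (I(G-e)^2, x_ix_j)$ with $e=x_ix_j$ and further splitting with a short exact sequence with respect to $x_i$. The residual piece $(I(G-e)^2, x_ix_j, x_i) = (I(G-x_i)^2, x_i)$ is handled by the inductive hypothesis combined with the inequality $\alpha_2(G-x_i) \geq \alpha_2(G)-1$ from \cite[Lemma 3.1]{s10}. The new colon $((I(G-e)^2, x_ix_j) : x_i) = (I(G-e)^2 : x_i) + (x_j)$ coincides with $(I(G-x_j)^2 : x_i) + (x_j)$, which fits the framework of Corollary \ref{cordepth} (or, after polarization, another application of Lemma \ref{last}). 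Lemma \ref{star} supplies the crucial saving in the $W(K_3)$-free case, when the closed neighborhoods of three mutually adjacent vertices must be removed.

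The main obstacle is the bookkeeping for this quotient term: the three cases allow only deficits of $2$, $1$, and $0$ respectively from $\alpha_2(G)$, and one must choose the auxiliary vertex $x_i$ (and possibly subsequent reductions) carefully so that the inductive arguments do not accumulate more than the permitted loss. The triangle-free case is the most delicate because no deficit is allowed, which forces the triangle-free property to be propagated through the recursion so that the sharper $L=\emptyset$ bound from Corollary \ref{cordepth} can be applied at every step.
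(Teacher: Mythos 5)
Your treatment of the colon term $\big(I(G)^2:x_ix_j\big)$ is fine as far as it goes (Lemma \ref{last} with $A=\emptyset$ does give the three stated bounds), but the plan for the quotient term $S/(I(G)^2,x_ix_j)$ has a genuine gap that is not mere bookkeeping. In your inner exact sequence the residual piece is $S/(I(G\setminus x_i)^2,x_i)$, whose depth the inductive hypothesis bounds only by $\alpha_2(G\setminus x_i)-2\geq \alpha_2(G)-3$; the depth lemma then caps your bound for $S/(I(G)^2,x_ix_j)$, and hence for $S/I(G)^2$, at $\alpha_2(G)-3$, one worse than claim (1) (and the same off-by-one loss occurs in cases (2) and (3), where no deficit at all is allowed). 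There is no choice of auxiliary vertex that repairs this in general, because the depth lemma only ever returns the minimum of the two outer depths. Moreover, your ``new colon'' $\big((I(G\setminus e)^2,x_ix_j):x_i\big)=(I(G\setminus x_j)^2:x_i)+(x_j)$ involves a colon of a squared edge ideal by a \emph{single variable}; this equals $I(G\setminus x_j)\cdot(I(G\setminus x_j):x_i)+(x_j)$ and is not of the shape treated by Corollary \ref{cordepth} (an intersection of two colons of $I(G\setminus A)$) or by Lemma \ref{last} (a colon of a square by an \emph{edge}), so that step has no support in the paper's toolkit either.

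The paper avoids both problems by a different decomposition: it filters $I^2\subseteq I^2+(u_1)\subseteq\cdots\subseteq I^2+(u_1,\ldots,u_m)=I$ through \emph{all} generators of $I$, so the terminal quotient is $S/I$, which costs nothing (its depth is at least $\alpha_2(G)+1$ by Proposition \ref{spn}), and the only colon ideals that appear are $\big((I^2+(u_1,\ldots,u_{k-1})):u_k\big)$, which Banerjee's theorem (after reordering the generators) identifies as $\big(I(G\setminus A)^2:x_ix_j\big)+(A)$ for some $A\subseteq (N_G(x_i)\cup N_G(x_j))\setminus\{x_i,x_j\}$. This is precisely why Lemma \ref{last} is stated for an arbitrary such $A$ rather than only $A=\emptyset$. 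Finally, part (3) is handled in the paper without any of this machinery: for triangle-free $G$ one has $I(G)^2=I(G)^{(2)}$, and the symbolic-power bound from \cite{s10} applies directly; your route for (3) would again be blocked by the quotient-term deficit. To salvage your argument you would need to replace the single exact sequence by the full filtration (or otherwise control $\depth S/(I(G)^2,x_ix_j)$ with zero loss), which is essentially what the paper does.
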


\begin{proof}
Set $I:=I(G)$ and let $G(I)=\{u_1, \ldots, u_m\}$ be the set of minimal monomial generators of $I$. For every integer $k$ with $1\leq k\leq m$, consider the short exact sequence
\begin{align*}
0 & \longrightarrow \frac{S}{(I^2+(u_1, \ldots, u_{k-1})):u_k}\longrightarrow \frac{S}{I^2+(u_1, \ldots, u_{k-1})}\\ & \longrightarrow \frac{S}{I^2+(u_1, \ldots, u_k)}\longrightarrow 0,
\end{align*}
where for $k=1$, the ideal $(u_1, \ldots, u_{k-1})$ is the zero ideal. It follows from depth Lemma \cite[Proposition 1.2.9]{bh} that
\begin{align*}
& \depth_S(I^2+(u_1, \ldots, u_{k-1}))\\ & \geq \min\big\{\depth_S((I^2+(u_1, \ldots, u_{k-1})):u_k), \depth_S(I^2+(u_1, \ldots, u_k))\big\}.
\end{align*}
Using the above inequality inductively, we have
\begin{align*}
& \depth_S I^2\geq\\ & \min\bigg\{\depth_S (I^2+I), \min\big\{\depth_S ((I^2+(u_1, \ldots, u_{k-1})):u_k)\mid 1\leq k\leq m\big\}\bigg\}= \\ & \min\big\{\depth_S I, \depth_S((I^2+(u_1, \ldots, u_{k-1})):u_k)\mid 1\leq k\leq m\big\}\geq\\ & \min\big\{\alpha_2(G)+1, \depth_S((I^2+(u_1, \ldots, u_{k-1})):u_k)\mid 1\leq k\leq m\big\},
\end{align*}
where the last inequality follows from Proposition \ref{spn}. Hence, in order to prove (1) and (2) it is enough to show that for every integer $k$ with $1\leq k\leq m$, we have$$\depth_S((I^2+(u_1, \ldots, u_{k-1})):u_k)\geq \alpha_2(G)-2,$$and if $G$ is a $W(K_3)$-free graph, then$$\depth_S((I^2+(u_1, \ldots, u_{k-1})):u_k)\geq \alpha_2(G)-1.$$

Fix an integer $k$ with $1\leq k\leq m$ and assume that $u_k=x_ix_j$. Using \cite[Theorem 4.12]{b}, we may suppose that for every pair of integers $1\leq s< t\leq m$, one of the following conditions holds.
\begin{itemize}
\item [(i)] $(u_s:u_t) \subseteq (I^2:u_t)$; or
\item [(ii)] there exists an integer $\ell\leq t-1$ such that $(u_{\ell}:u_t)$ is generated by a variable, and $(u_s:u_t)\subseteq (u_{\ell}:u_t)$.
\end{itemize}
We conclude from (i) and (ii) above that
\[
\begin{array}{rl}
\big((I^2+ (u_1, \ldots, u_{k-1})):u_k\big)=(I^2:u_k)+({\rm some \ variables}).
\end{array} \tag{17} \label{17}
\]
Assume that $A$ is the set of variables belonging to $\big((I^2+ (u_1, \ldots, u_{k-1})):u_k\big)$. Let $x_r$ be an arbitrary variable in $A$. This means that $x_rx_ix_j$ belongs to the ideal $I^2+(u_1, \ldots, u_{k-1})$. As $I^2$ is generated in degree $4$, we deduce that $x_rx_ix_j\notin I^2$. Hence, there is an integer $l$ with $1\leq l\leq k-1$ such that $u_l$ divides $x_rx_ix_j$. Since, $u_l\neq u_k$, it follows that either $u_l=x_rx_i$ or $u_l=x_rx_j$. In particular, $$x_r\in (N_G(x_i)\cup N_G(x_j))\setminus \{x_i,x_j\}.$$Consequently, $A\subseteq (N_G(x_i)\cup N_G(x_j))\setminus \{x_i,x_j\}$. It follows from equality (\ref{17}) that
\begin{align*}
& \big((I^2+ (u_1, \ldots, u_{k-1})):u_k\big)=\big(I^2:u_k\big)+({\rm the\ ideal\ generated\ by}\ A)\\ & =\big(I^2+ ({\rm the\ ideal\ generated\ by}\ A)):u_k\big)\\ & =\big(I(G\setminus A)^2+({\rm the\ ideal\ generated\ by}\ A)):u_k\big)\\ & =\big(I(G\setminus A)^2:u_k\big)+({\rm the\ ideal\ generated\ by}\ A).
\end{align*}
The above equalities imply that$$\depth_S\big((I^2+ (u_1, \ldots, u_{k-1})):u_k\big)=\depth_{S_A}\big(I(G\setminus A)^2:u_k\big),$$where $S_A$ is the polynomial ring obtained from $S$ by deleting the variables in $A$. Using Lemma \ref{last}, we deduce that$$\depth_S((I^2+(u_1, \ldots, u_{k-1})):u_k)\geq \alpha_2(G)-2,$$and if $G$ is a $W(K_3)$-free graph, then$$\depth_S((I^2+(u_1, \ldots, u_{k-1})):u_k)\geq \alpha_2(G)-1.$$This completes the proof of parts (1) and (2).

To prove (3), let $G$ be a triangle-free graph. Then we know from \cite[Lemma 3.10]{rty} that $I(G)^2=I(G)^{(2)}$, where $I(G)^{(2)}$ denotes the second symbolic power of $I(G)$. The assertion now follows from \cite[Theorem 4.2]{s10}.
\end{proof}

The following examples show that the inequalities obtained in Theorem \ref{main} are sharp.

\begin{exmps} \label{sharp}
\begin{itemize}
\item[(1)] Suppose $G=W(K_3)$. Then $\depth_SI(G)^2=1$ which is equal to $\alpha_2(G)-2$.
\item[(2)] Let $G$ be the graph which is obtained from $W(K_3)$ by deleting one of its leaves. Then $\depth_SI(G)^2=1$ which is equal to $\alpha_2(G)-1$.
\item[(3)] Let $G=P_4$ be the path of length three. Then $\depth_SI(G)^2=2$ which is equal to $\alpha_2(G)$.
\end{itemize}
\end{exmps}




\end{document}